\newtheorem{theorem}{Theorem}[section]
\newtheorem{rem}[theorem]{Remark}
\newtheorem{example}[theorem]{Example}
\newtheorem{mydef}[theorem]{Definition}
\newcommand\qed{{\hspace*{\fill}$\Box$\vskip12pt plus 1pt}}
\newcommand\mymapsto{\mathrel{\ooalign{$\rightarrow$\cr%
  \kern-.15ex\raise.275ex\hbox{\scalebox{1}[0.522]{$\mid$}}\cr}}}
\newenvironment{proof}{{\noindent\bf Proof.\ }}{\qed}
\newenvironment{remark}{\begin{rem}\em}{\end{rem}}
\newcommand{\sG}{{\mathcal G}}
\newcommand{\sL}{{\mathcal L}}
\newcommand\bC{{\mathbb C}}
\newcommand\bN{{\mathbb N}}
\newcommand\bR{{\mathbb R}}
\newcommand\bZ{{\mathbb Z}}
\newcommand\cS{{\mathcal S}}
\newcommand\suchthat{~|~}
\DeclareMathOperator*{\Pow}{{\rm Pow}}
\DeclareMathOperator*{\OPow}{{\rm OPow}}
\DeclareMathOperator*{\IPow}{{\rm IPow}}
\begin{document}
\title{Real monodromy action}

\author{
Jonathan D. Hauenstein\thanks{Department of Applied and Computational Mathematics and Statistics,
University of Notre Dame, Notre Dame, IN 46556 (hauenstein@nd.edu, \url{www.nd.edu/\~jhauenst}).
This author was supported in part by
NSF grant CCF-1812746 and
ONR N00014-16-1-2722.
}
\and
Margaret H. Regan\thanks{Department of Applied and Computational Mathematics and Statistics,
University of Notre Dame (mregan9@nd.edu,
\url{www.nd.edu/\~mregan9}).
This author was supported in part by
Schmitt Leadership Fellowship in Science and Engineering and NSF grant CCF-1812746.}
}

\date{\today}

\maketitle

\begin{abstract}
\noindent
The monodromy group
is an invariant for parameterized systems
of polynomial equations that encodes
structure of the solutions over the parameter space.
Since the structure of real solutions over
real parameter spaces are of interest
in many applications, real monodromy action
is investigated here.
A naive extension of monodromy action
from the complex numbers to the real numbers
is shown to be very restrictive.
Therefore, we define
a real monodromy structure which need not be a group
but contains tiered characteristics about the real
solutions.  This real monodromy structure is applied to an example in kinematics which summarizes
all the ways performing loops parameterized by
leg lengths can cause a mechanism to
change poses.

\noindent {\bf Keywords.} Monodromy group, numerical algebraic geometry, real algebraic geometry,
homotopy continuation, parameter homotopy, kinematics

\noindent{\bf AMS Subject Classification.} 65H10, 65H20, 14P99, 14Q99
\end{abstract}


\section{Introduction}\label{Sec:Intro}

For a polynomial system defined over
a complex parameter space, the
monodromy group encodes
permutations of the solutions
over loops in the parameter space
and can be viewed as a geometric counterpart
to Galois groups \cite{Harris,Hermite}
utilized in number theory and arithmetic geometry.
Monodromy groups are used in algebraic geometry
to view structure of the solutions such as
symmetry, restrictions on the number of
real solutions, and decomposition of
varieties into irreducible components.
The complex numbers bestow many properties
on the monodromy group such as it is
base point independent and does
not change when restricting to a general
curve section of the parameter space \cite{Zariski}.
These simplify the
computation of the monodromy group \cite{MonodromyAlg}
summarized in \S~\ref{Sec:CMG}.

Since real solutions over real points in a parameter
space are typically of most interest in many
applications, we aim to understand the behavior
of the real solutions over real loops in the
parameter space.  In kinematics, this is related to
nonsingular assembly mode change for parallel manipulators \cite{Gosselin,Hayes,Zein,Innocenti,Macho,Bonev,Husty}
which is important in calibration due to
the possible change of pose at the ``home'' position.
To illustrate, consider the 3RPR mechanism shown in Figure~\ref{Fig:mechanism} which consists of
three prismatic legs with revolute joints
that are anchored on one side and
attached to a moving triangular platform on the other.
Thus, one would need to identify all the ways that real motion of the mechanism can lead to different poses
in the ``home'' position defined by a
fixed set of leg~lengths.
In this context,
the main motivation is to develop
a mathematical description of all
possible nonsingular assembly mode changes
described by real monodromy action.

\begin{figure}[h!]
	\centering
	\includegraphics[scale = 0.5]{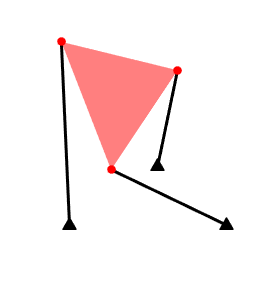}
	\caption{An example of a 3RPR mechanism.}\label{Fig:mechanism}
\end{figure}

A naive approach is to utilize a real monodromy
group defined similarly as the (complex) monodromy group. This definition leads to heavy restrictions on the construction of real loops and can often cause no pertinent information to be gained in the computations as discussed in~\S~\ref{Sec:RMG_def1}.
We instead propose a {\em real monodromy structure}
to obtain piecewise information about the permutations of the real solutions outlined in \S~\ref{Sec:RMG_def2}.  In particular, this
real monodromy structure contains all information
regarding nonsingular assembly mode changes.

The remainder of the paper is as follows. Section~\ref{Sec:CMG} describes some background
on the (complex) monodromy group.
The real monodromy group and structure are described in Section~\ref{Sec:RMG}
along with illustrative examples.
Section~\ref{Sec:3RPR} describes computing
the real monodromy structure of a 3RPR
mechanism where 2 of the legs can change
length.  Finally, the paper concludes in Section~\ref{Sec:Conclusion}.

\section{Monodromy group}\label{Sec:CMG}

Let $F(x;p)$ be a polynomial system with variables
$x\in\bC^N$ and parameters $p\in\bC^P$.
Assume that $F(x;p^*)=0$ has $D\in\bN$
isolated nonsingular solutions in $\bC^N$
for generic $p^*\in\bC^P$.  That is,
there is a Zariski open dense subset $U\subset\bC^P$
such that $F(x;p^*) = 0$ has $D$ nonsingular
isolated solutions in~$\bC^N$ for every $p^*\in U$.
Fix a point $b\in U$ and let $x^{(1)},\dots,x^{(D)}\in\bC^N$ be the $D$ nonsingular isolated solutions
to $F(x;b) = 0$.
If $\cS_D$ denotes the symmetric
group on $D$ elements,
then each loop $\gamma\subset U$
starting and ending at $b$ generates a permutation
$\sigma_\gamma \in \cS_D$
where $\sigma_\gamma(i) = j$
provided that the solution path of $F(x;p) = 0$
over $\gamma$ starting at $x^{(i)}$
ends at $x^{(j)}$.
The corresponding {\em monodromy group} is
simply the collection of all such permutations, namely
\begin{equation}\label{eq:ComplexMonodromyGroup}
\{\sigma_\gamma\in \cS_D\suchthat\gamma\subset U\hbox{~is loop starting and ending at~}b\}.
\end{equation}
The group structure arises naturally from concatenation
of loops.

The monodromy group is independent of the choice
$b\in U$ and the monodromy group of $F(x;p)$
for $p\in\bC^P$ is equal to the monodromy
group of $G(x;t) = F(x;\ell(t))$
where $\ell:\bC\rightarrow\bC^P$ is a general
affine linear function \cite{Zariski}.
In the one parameter case,
the monodromy group
is generated by the permutations arising
from the finitely many loops
that generate the fundamental group of
the intersection of $U$ to the line parameterized
by $\ell(t)$.  This is described
in detail with numerical algebraic geometric
computations in \cite{MonodromyAlg}
and illustrated in the following example.

\begin{example}\label{ex:Notes1.2}
Consider the parameterized
polynomial system
$$F(x;p) = \left[\begin{array}{c} x_1^2-x_2^2-p_1 \\ 2x_1x_2 - p_2 \end{array}\right] = 0$$
and $U = \{p\in\bC^2\suchthat p_1^2+p_2^2\neq 0\}$.
Thus, for every $p^*\in U$, $F(x;p^*) = 0$
has $D = 4$ nonsingular isolated solutions in $\bC^2$.
We take $b = (1,0)\in U$ with corresponding solutions
$$
x^{(1)} = (1,0),\hspace{5mm} x^{(2)} = (-1,0),\hspace{5mm} x^{(3)} = (0,\sqrt{-1}),\hspace{5mm} x^{(4)} = (0,-\sqrt{-1}).
$$

\begin{figure}[!b]
\begin{center}
\begin{tabular}{cc}
	\includegraphics[scale = 0.5]{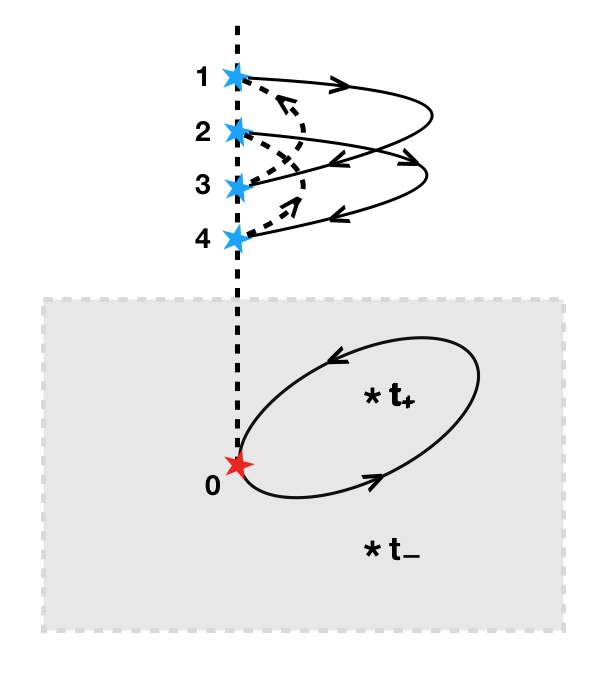} &
    \includegraphics[scale = 0.5]{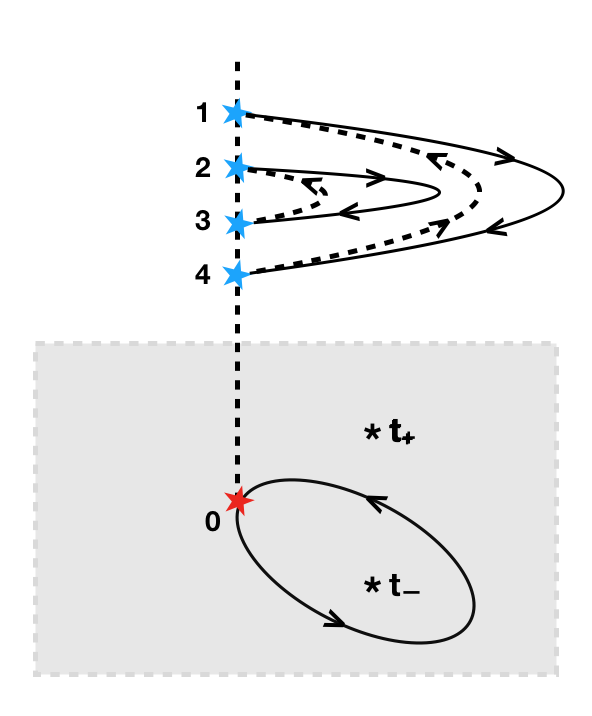} \\
    (a) & (b)
    \end{tabular}
    \end{center}
\caption{Illustration of (a) loop $\gamma_+$
and permutation $\sigma_{\gamma_+}$
and (b) loop $\gamma_-$
and permutation~$\sigma_{\gamma_-}$}
    \label{Fig:IllustrativeMonodromy}
\end{figure}

Consider restricting the parameter space to the line $\sL\subset\bC^2$ parameterized by $\ell(t) = (1-t,2t)$
so that \mbox{$\ell(0) = b$}.
In particular, $U\cap\sL\cong \bC\setminus\{t_+,t_-\}$
where $t_{\pm} = \frac{1\pm 2\sqrt{-1}}{5}$.
Let $\gamma_\pm\subset\bC$
be a simple loop starting and ending at~$0$ which encircles $t_\pm$ but not $t_\mp$, respectively.
The corresponding permutations, written in cycle notation, are
$$\sigma_{\gamma_+} = (1\hspace{1.5mm}3) (2\hspace{1.5mm}4)
\hbox{~~~~~~and~~~~~~}
\sigma_{\gamma_-} = (1\hspace{1.5mm}4) (2\hspace{1.5mm}3)
$$
which are illustratively shown in Fig.~\ref{Fig:IllustrativeMonodromy}.
Therefore, the monodromy group
is generated by the two permutations $\sigma_{\pm}$
which is the Klein group on four elements
$K_4 = \bZ_2\times\bZ_2\subset\cS_4$, namely
\begin{equation}\label{eq:K4}
K_4 = \{(1),\hspace{2mm} (1\hspace{1.5mm}2)(3\hspace{1.5mm}4),\hspace{2mm} (1\hspace{1.5mm}3)(2\hspace{1.5mm}4),\hspace{2mm} (1\hspace{1.5mm}4)(2\hspace{1.5mm}3)\}.
\end{equation}
The reason for the defectivity of the monodromy
group can be observed, for example, from the relation
between $x_1$, $p_1$, and $p_2$, namely
$$4x_1^4 - 4x_1^2p_1 - p_2^2 = 0$$
showing that the solutions arise in two pairs since
$$x_1 = \pm\sqrt{\frac{p_1\pm \sqrt{p_1^2+p_2^2}}{2}}.$$
\end{example}

We note that if the monodromy group is the full symmetric group,
computing corresponding permutations for a
few random loops is generally sufficient
to generate the full symmetric group~\cite{Leykin}.
Random monodormy loops can also be used effectively for performing numerical irreducible decomposition \cite{BHSW:BertiniBook,Sommese}.

\section{Monodromy over the real numbers}\label{Sec:RMG}

Since many applications, particularly ones arising in science and engineering, are only interested in the real solutions,
we consider monodromy action of real solutions
over a real parameter space for a real polynomial system
$F(x;p)$ where $x\in\bR^N$ and $p\in\bR^P$.

\subsection{Real monodromy group}\label{Sec:RMG_def1}

In the complex case, as summarized in \S~\ref{Sec:CMG},
loops arise on the nonempty Zariski open dense subset
$U\subset\bC^P$ consisting of parameter values
where the number of nonsingular isolated complex solutions is constant.
One can naively take a similar approach over the real numbers
as follows.  Fix a base point $b\in U\cap \bR^P$
and let $R$ be the number of nonsingular isolated
real solutions to $F(x;b) = 0$.  Hence,
there is a connected open subset $U_R\subset\bR^P$
containing $b$ such that the number of nonsingular
isolated real solutions of $F(x;p^*) = 0$ is equal
to $R$ for all $p^*\in U_R$.
As in complex case, loops in~$U_R$ yield
permutations in $\cS_R$.
Following \eqref{eq:ComplexMonodromyGroup},
the collection of all such permutations
\begin{equation}\label{eq:RealMonodromyGroup}
\{\sigma_\gamma\in \cS_R\suchthat\gamma\subset U_R\hbox{~is loop starting and ending at~}b\}
\end{equation}
is the {\em real monodromy group} of $F(x;p)$ with base
point $b$.  This group is naturally independent of the choice
of base point $b$ inside of $U_R$.

\begin{example}\label{ex:Notes2.1/2.2}
Reconsider $F(x;p)$ from Ex.~\ref{ex:Notes1.2}
with $b = (1,0)$.  There are $R = 2$ real solutions
to $F(x;b) = 0$, namely $x^{(1)} = (1,0)$ and $x^{(2)} = (-1,0)$.
Since, for all $p^*\in U_R = \bR^2\setminus\{(0,0)\}$,
$F(x;p^*) = 0$ has $2$ real solutions, the corresponding
real monodromy group
is generated by the loop $\gamma:[0,1]\rightarrow U_R$ where $\gamma(t) = (\cos(2\pi t),\sin(2\pi t))$
with $\gamma(0) = \gamma(1) = b$.
This loops yields the permutation $\sigma_\gamma = (1\hspace{1.5mm}2)$ so that the corresponding real monodromy group
is $\cS_2 = \{(1), (1\hspace{1.5mm}2)\}$.
\end{example}

In the complex case, there is a unique monodromy group
based on the nonempty Zariski open dense subset $U\subset\bC^P$.
In the real case, there can be multiple real monodromy groups.
The number of real solutions $R$ on the open set $U_R$
is called a {\em typical} number of
nonsingular real solutions of $F(x;p) = 0$.
Thus, one has a real monodromy group of $F(x;p)$
for each typical number of nonsingular
real solutions $R$
and each connected open subset $U_R$ in the set of all parameters
$p^*\in\bR^P$ for which $F(x;p^*) = 0$ has $R$
nonsingular isolated real solutions.

\begin{example}\label{ex:Univariate}
The polynomial $F(x;p) = x^2 + 1 - p^2$ has two typical
number of real solutions, namely $0$ when $-1 < p < 1$ and
$2$ when either $p < -1$ or $p > 1$.
The real monodromy group for $R = 0$ and $U_R = (-1,1) \subset\bR$
is empty since there are no real solutions.
The real monodromy group for $R = 2$ for both $U_R = (-\infty,-1) \subset\bR$ and $U_R = (1,\infty) \subset\bR$
is $\{(1)\}\subset\cS_2$.
\end{example}

The previous illustrative examples suggest the following.

\begin{theorem}\label{Thm:OnePar_RMG}
Suppose that $F(x;p)$ is a real polynomial system
with $x\in\bR^N$ and $p\in\bR^P$.  Let $b\in\bR^P$
such that $F(x;b) = 0$ has $R>0$ nonsingular isolated real
solutions and $U_R\subset\bR^P$ which is
a connected open set containing $b$
such that the number of nonsingular
isolated real solutions of $F(x;p^*) = 0$ is equal
to $R$ for all $p^*\in U_R$.
If the fundamental group of $U_R$ is trivial,
then the corresponding real monodromy group is $\{(1)\}\subset\cS_R$.  In particular, if $P = 1$, then the corresponding
real monodromy group is $\{(1)\}\subset\cS_R$.
\end{theorem}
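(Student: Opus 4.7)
The plan is to show that the real solutions over $U_R$ form a covering space of $U_R$, and then apply the standard fact that monodromy of a covering space factors through the fundamental group.

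First I would set up the covering space structure. Since $F(x;p^*) = 0$ has exactly $R$ nonsingular isolated real solutions for each $p^* \in U_R$, the Jacobian $\partial F/\partial x$ is nonsingular at each such solution. Letting
\[
V_R = \{(x,p) \in \bR^N \times U_R \suchthat F(x;p) = 0 \text{ and } x \text{ is a nonsingular real solution}\},
\]
the implicit function theorem says that the projection $\pi : V_R \to U_R$ given by $\pi(x,p) = p$ is a local diffeomorphism. Since each fiber has exactly $R$ points and $U_R$ is connected, a standard argument (each preimage of a small evenly covered neighborhood splits into $R$ disjoint sheets) shows that $\pi$ is in fact an $R$-sheeted covering map.

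Next I would invoke the homotopy lifting property. Fix a loop $\gamma : [0,1] \to U_R$ with $\gamma(0) = \gamma(1) = b$. For each solution $x^{(i)}$ of $F(x;b) = 0$, there is a unique lift $\tilde\gamma_i : [0,1] \to V_R$ with $\tilde\gamma_i(0) = (x^{(i)},b)$, and by construction $\tilde\gamma_i(1) = (x^{(\sigma_\gamma(i))},b)$. If $\gamma$ is nullhomotopic in $U_R$, a homotopy from $\gamma$ to the constant loop at $b$ lifts to a homotopy of paths starting at $(x^{(i)},b)$; since the endpoints of a lifted homotopy stay in the discrete fiber $\pi^{-1}(b)$ they must be constant, forcing $\tilde\gamma_i(1) = (x^{(i)},b)$. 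Hence $\sigma_\gamma = (1)$. Assuming $\pi_1(U_R,b)$ is trivial, every loop is nullhomotopic, so the real monodromy group is $\{(1)\} \subset \cS_R$.

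Finally, for the one-parameter case $P = 1$, the set $U_R \subset \bR$ is a connected open subset of $\bR$, hence an open interval (possibly unbounded), and therefore simply connected, so the first part applies. The main thing to be careful about is verifying that $\pi : V_R \to U_R$ really is a covering and not merely a local diffeomorphism; this uses the constancy of the fiber cardinality on the connected set $U_R$, together with the fact that nonsingular isolated real solutions cannot collide or vanish within $U_R$ by definition of $U_R$. Once that is in place the rest is a direct application of covering space theory.
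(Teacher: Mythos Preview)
Your proposal is correct and follows essentially the same approach as the paper: the paper's proof simply asserts that a trivial fundamental group means every loop is contractible and hence yields the identity permutation, and your argument supplies the covering-space justification behind that assertion. The only difference is level of detail---you make explicit the implicit function theorem step and the homotopy lifting property that the paper leaves tacit.
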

\begin{proof}
If the fundamental group of $U_R$ is trivial,
then all loops in $U_R$ are contractible
yielding only the identity permutation in the real monodromy group.
In particular, when $P=1$, then $U_R$ is an interval
where all loops are contractible.
\end{proof}

The system from Ex.~\ref{ex:Notes2.1/2.2} was specifically designed to have a nontrivial real monodromy group.  Since one often expects each corresponding set $U_R$ to be a cell thereby
having a trivial fundamental group,
one can often expect trivial
real monodromy groups for problems arising
in applications.

\begin{figure}[!b]
\begin{center}
	\includegraphics[scale = 0.25]{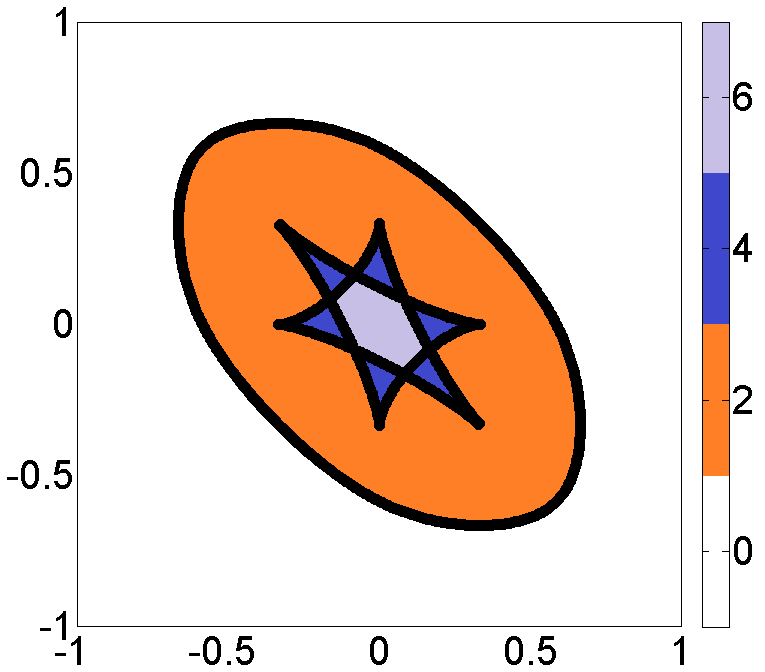}
    \end{center}
\caption{Parameter space $\bR^2$ colored by
number of nonsingular real solutions to \eqref{eq:Kuramoto}.}
    \label{Fig:Kuramoto}
\end{figure}

\begin{example}\label{Ex:Kuramoto}
The Kuramoto model \cite{Kuramoto}
provides a mathematical model of
synchronous behavior of coupled oscillators.
We consider $n = 3$ oscillators and
avoid the trivial rotation by fixing $\theta_3 = 0$.
For parameters $\omega = (\omega_1,\omega_2)$,
the steady-state solutions of the Kuramoto model satisfy
$$\left[\begin{array}{c}
\sin(\theta_1-\theta_2)+\sin(\theta_1-\theta_3) - 3\omega_1 \\
\sin(\theta_2-\theta_1)+\sin(\theta_2-\theta_3) - 3\omega_2
\end{array}\right] = 0.$$
We convert to a polynomial system
by taking $s_i = \sin(\theta_i)$ and $c_i = \cos(\theta_i)$, namely
\begin{equation}\label{eq:Kuramoto}
F(s_1,c_1,s_2,c_2;\omega_1,\omega_2) =
\left[\begin{array}{c}
(s_1 c_2 - c_1 s_2) + (s_1 c_3 - c_1 s_3) - 3\omega_1 \\
(s_2 c_1 - c_2 s_1) + (s_2 c_3 - c_2 s_3) - 3\omega_2 \\
s_1^2 + c_1^2 - 1 \\
s_2^2 + c_2^2 - 1 \\
\end{array}\right].
\end{equation}
Figure~\ref{Fig:Kuramoto} adapted from \cite[Fig.~2]{CossKuramoto} colors the parameters based
on the number of real solutions.
In particular, the connected set having $6$
real solutions and each of the six connected sets
having $4$ real solutions have trivial fundamental
group and thus trivial real monodromy group.
The connected set having $2$ real solutions
has a nontrivial fundamental group, but
one can check that this set also has a trivial
real monodromy group.
The reason for this will become
apparent in Ex.~\ref{Ex:Kuramoto2}.
Moreover, the (complex)
monodromy group
computed using \cite{MonodromyAlg} is $\cS_6$,
which is not a solvable group.

\end{example}

The condition that the number of nonsingular real solutions along loops in the real parameter space remain constant is a restriction
that both ensures a group structure
and is responsible for often having a
trivial real monodromy group.
The following provides
an illustration of this restriction.

\begin{example}\label{ex:BadRealGp}
Consider the parameterized polynomial system
\begin{equation}\label{eq:BadRealGp}
F(x;p) = \left[\begin{array}{c} (x_1^2-x_2^2-p_1)(x_1^2+p_1) \\ 2x_1x_2 - p_2 \end{array}\right] = 0
\end{equation}
which is a modification of the system considered in
Ex.~\ref{ex:Notes1.2} and~\ref{ex:Notes2.1/2.2}.
For $b = (-1,0)$, there are $R=4$ nonsingular
real solutions, namely:
$$
x^{(1)} = (1,0),\hspace{5mm} x^{(2)} = (-1,0),\hspace{5mm} x^{(3)} = (0,1),\hspace{5mm} x^{(4)} = (0,-1).
$$
In this case, $U_R = \{p_1 < 0\}\subset\bR^2$
which has a trivial fundamental group and thus the
real monodromy group is trivial.
Figure~\ref{Fig:BadRealGp} illustrates the decomposition
of the parameter space based on the number of real solutions.

Consider the loop $\gamma(t) = (-\cos(2\pi t),\sin(2\pi t))$
for $t\in[0,1]$ starting and ending at $b$.
If we only focus on the two solution paths
starting at $x^{(1)}$ and $x^{(2)}$,
these paths remain nonsingular over the loop and the endpoints
interchange as one would expect from
the real monodromy group in Ex.~\ref{ex:Notes2.1/2.2}.
The two solution paths
starting at $x^{(3)}$ and $x^{(4)}$
are nonsingular and real for $t\in[0,1/4)\cup(3/4,1]$,
nonsingular and nonreal for $t\in(1/4,3/4)$,
and at infinity for $t\in\{1/4,3/4\}$.

\begin{figure}[!ht]
\begin{center}
\includegraphics[scale = 0.5]{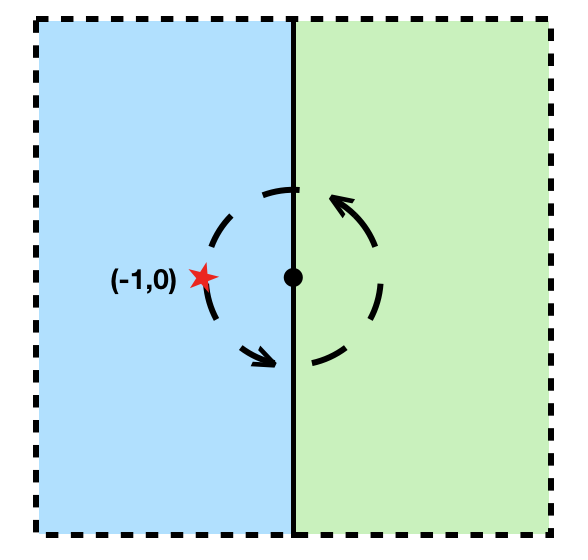}
\end{center}
\caption{Parameter space $\bR^2$, colored
by number of nonsingular real solutions
for $F(x;p)$ in \eqref{eq:BadRealGp}: $2$ on $\{p_1 > 0\}$ (green) and $4$ on $\{p_1 < 0\}$ (blue), base point $b = (-1,0)$, and loop $\gamma(t)$.}
\label{Fig:BadRealGp}
\end{figure}
\end{example}

Example~3.4 shows that important information about
the connections between some of the
real solutions can be obtained by
relaxing the requirement that {\em all}
real solutions remain nonsingular along
the path.  With this relaxation, one
may lose the group structure, but obtains a complete
picture of the interconnections between
the real solutions over a base point $b$,
which is described~next.

\subsection{Real monodromy structure}\label{Sec:RMG_def2}

Writing the monodromy action as a group of permutations
provides an efficient representation of the behavior
of the solutions.  When the action under consideration
may no longer be a group, we propose a structure
to encode the action called the {\em real monodromy structure}.

The following summarizes some sets of interest.

\begin{mydef}
For a nonnegative integer $R$, let
$\Pow(R)$ be the {\em power set}
of $\{1,\dots,R\}$ which is the set of all subsets of $\{1,\dots,R\}$.  Let $\OPow(R)$ be the {\em ordered power set}
of $\{1,\dots,R\}$ which is the set of all ordered subsets of $\{1,\dots,R\}$.
For $0\leq k\leq R$, let $\OPow_k(R)$
be the {\em $k$-ordered power set}
of $\{1,\dots,R\}$ which is the set of all ordered subsets of $\{1,\dots,R\}$ of size $k$.
Let $\IPow_k(R)$ be the subset of $\OPow_k(R)$
where the entries of the
elements are listed in increasing order.
\end{mydef}

\begin{example}\label{ex:Notes1.3/4}
To illustrate, we consider $R = 2$.  Thus,
\begin{itemize}
\item $\Pow(2) = \{\emptyset,\{1\},\{2\},\{1,2\}\}$
\hbox{~~and~~~~}$\OPow(2) = \{\emptyset,\{1\},\{2\},\{1,2\},\{2,1\}\}$;
\item $\OPow_0(2) = \{\emptyset\}$;
\item $\OPow_1(2) = \{\{1\},\{2\}\} = \IPow_1(2)$;
\item $\OPow_2(2) = \{\{1,2\},\{2,1\}\}$
\hbox{~~and~~~~}$\IPow_2(2) = \{\{1,2\}\}$.
\end{itemize}
In particular, for elements in the power set, order does not matter.  Order does matter
for elements in the ordered power set.
\end{example}

With this notation, we present
the real monodromy structure.

\begin{mydef}
For the real parameterized polynomial system $F(x;p)$,
fix a base point $b\in\bR^P$ and let
$x^{(1)},\dots,x^{(R)}\in\bR^N$ be
the $R$ nonsingular isolated solutions of $F(x;b) = 0$.
The {\em real monodromy structure} of $F$ at $b$
is a collection $\sG_\bullet = \{\sG_1,\dots,\sG_R\}$
of functions
$$\sG_k:{\IPow}_k(R) \rightarrow \Pow({\OPow}_k(R))$$
for $k = 1,\dots,R$ constructed as follows.
For each $\mathcal{Q} = \{q_1,...,q_k\} \in \IPow_k(R)$, $\sG_k(\mathcal{Q})$ is the collection of subsets of $\OPow_k(R)$ of the form $\{s_1,...,s_k\}\in\OPow_k(R)$
such that there exists a loop $\gamma\subset\bR^P$
starting and ending at $b$
where the solution path
of $F(x;p) = 0$ over $\gamma$ starting at $x^{(q_i)}$
is nonsingular and ends at $x^{(s_i)}$
for $i=1,\dots,k$.
\end{mydef}

The real monodromy group
describes how the set of all solutions
can be permuted whereas the real monodromy
structure describes how each subset
of solutions can be permuted.
Hence, the real monodromy group
is encoded in $\sG_R$.

\begin{example}\label{ex:RMS_group}
Consider representing the real monodromy group
$\cS_2$ from Ex.~\ref{ex:Notes2.1/2.2}
as a real monodromy structure.
First, we construct the map $\sG_1$.
Since both corresponding solutions
can trivially return to themselves
or connect to the other,
$\sG_1$ maps both $\{1\}$ and $\{2\}$
to $\{\{1\},\{2\}\}$.
Similarly,~$\sG_2$ maps $\{1,2\}$
to $\{\{1,2\},\{2,1\}\}$
since the pair can remain unchanged or be permuted.
For simplicity, we will write $\sG_\bullet = \{\sG_1,\sG_2\}$ as
\newpage
\begin{itemize}
\item $\mathcal{G}_1$
	\begin{itemize}
		\item $\{1\},\{2\} \mymapsto \{\{1\},\{2\}\}$
	\end{itemize}
\item $\mathcal{G}_2$
	\begin{itemize}
		\item $\{1,2\} \mymapsto \{\{1,2\},\{2,1\}\}$.
	\end{itemize}
\end{itemize}
\end{example}

\begin{example}\label{ex:BadRealGpMS}
For Ex.~\ref{ex:BadRealGp}
with $b = (-1,0)$, the real monodromy structure
is $\sG_\bullet = \{\sG_1,\dots,\sG_4\}$:
\begin{itemize}
\item $\mathcal{G}_1$
	\begin{itemize}
		\item $\{1\},\{2\} \mymapsto \{\{1\},\{2\}\}$
		\item $\{q_1\}\mymapsto \{\{q_1\}\}$ for all others
	\end{itemize}
\item $\mathcal{G}_2$
	\begin{itemize}
		\item $\{1,2\} \mymapsto \{\{1,2\},\{2,1\}\}$
		\item $\{q_1,q_2\} \mymapsto \{\{q_1,q_2\}\}$ for all others
	\end{itemize}
\item $\mathcal{G}_3$
	\begin{itemize}
		\item $\{q_1,q_2,q_3\} \mymapsto \{\{q_1,q_2,q_3\}\}$
	\end{itemize}
\item $\mathcal{G}_4$
	\begin{itemize}
		\item $\{q_1,q_2,q_3,q_4\} \mymapsto \{\{q_1,q_2,q_3,q_4\}\}$.
	\end{itemize}
\end{itemize}
This shows that there is an interconnection
between $x^{(1)}$ and $x^{(2)}$ as
described by the loop
$\gamma(t)$ in Ex.~\ref{ex:BadRealGp},
whereas anything involving
$x^{(3)}$ or $x^{(4)}$ must be trivial.
The function $\sG_4$ encodes
the triviality of the real monodromy
group.  Moreover, $\sG_\bullet$ does not
have a~group structure since $\sG_4$ is trivial
while $\sG_1$ and $\sG_2$ are not.
\end{example}

\begin{remark}
A nonsingular assembly mode change
means that there is a real nonsingular path between
solution $x^{(i)}$ and $x^{(j)}$ where $i\neq j$.
Hence, nonsingular assembly mode changes
are described by $\sG_1$, namely $i\neq j$
such that $\{j\}\in \sG_1(\{i\})$.
\end{remark}

A group $G\subset\cS_R$ is said to be {\em $k$-transitive}
if, for every $\mathcal{Q} = \{q_1,...,q_k\}\in \IPow_k(R)$ and $\mathcal{T} = \{t_1,...,t_k\}\in\OPow_k(R)$,
there exists $\sigma\in G$ such that
$\sigma(q_i) = t_i$ for $i = 1,\dots,k$.
Clearly, if~$G$ is $k$-transitive for $k>1$,
then $G$ is also $(k-1)$-transitive.
If $G$ is $1$-transitive, then $G$
is~called~{\em transitive}.

\begin{example}
The group $K_4\subset\cS_4$ in \eqref{eq:K4} is
transitive.  It is not $2$-transitive since
there does not exist $\sigma\in K_4$
which maps the ordered set $\{1,2\}$ to
the ordered set $\{1,3\}$.
\end{example}

The following extends the notion
of transitivity to the real monodromy structure.

\begin{mydef}
A real monodromy structure $\sG_\bullet=\{\sG_1,\dots,\sG_R\}$
is {\em $k$-transitive} if
$$\sG_k(\mathcal{Q}) = {\OPow}_k(R)\in\Pow({\OPow}_k(R))$$
for every $\mathcal{Q}\in \IPow_k(R)$.
\end{mydef}

Clearly, if $\sG_\bullet$ is $k$-transitive
for $k > 1$,
then $\sG_\bullet$ is also $(k-1)$-transitive.
Moreover, $\sG_\bullet$ is called {\em transitive}
if it is $1$-transitive.  Hence,
a real monodromy structure is transitive
if and only if
$\sG_1(\{i\}) = \{\{1\},\dots,\{R\}\} = \OPow_1(R)$
for all $i = 1,\dots,R$ meaning
that, for every $i,j\in \{1,\dots,R\}$,
there is a nonsingular solution path
starting at $x^{(i)}$ and ending at $x^{(j)}$.

\begin{example}
The real monodromy structure $\sG_\bullet$ in
Ex.~\ref{ex:RMS_group} is $2$-transitive and hence
$1$-transitive.  The real monodromy stucture $\sG_\bullet$ in
Ex.~\ref{ex:BadRealGpMS} is not transitive.
\end{example}

We conclude this section with an
illustrative description of computing
the real monodromy structure for
the Kuramoto model in \eqref{eq:Kuramoto}
which is dependent on 2 parameters.

\begin{example}\label{Ex:Kuramoto2}
For the $n = 3$ Kuramoto system $F$
in \eqref{eq:Kuramoto},
Ex.~\ref{Ex:Kuramoto} showed that every
real monodromy group is trivial
and the (complex) monodromy group is
the full symmetric group $\cS_6$.
The following computes
the real monodromy structure $\sG_\bullet$
at $\omega^* = (0,0)$,
which is nontrivial thereby
identifying interconnections between the
real solutions.  At $\omega^*$, we label the six
solutions as
$$
\begin{array}{c}
x^{(1)} = (0,1,0,1),\hspace{5mm}
x^{(2)} = (0,1,0,-1),\hspace{5mm}
x^{(3)} = (0,-1,0,1),\hspace{5mm}
x^{(4)} = (0,-1,0,-1), \\[0.05in]
x^{(5)} = \frac{1}{2}(\sqrt{3},-1,-\sqrt{3},-1),\hspace{5mm}
x^{(6)} = \frac{1}{2}(-\sqrt{3},-1,\sqrt{3},-1).
\end{array}
$$

First, one decomposes the parameter space
into connected components based on the number of
real solutions.  Figure~\ref{Fig:Kuramoto} was generated, for example, using~\cite{Discriminant}
via {\tt Bertini} \cite{BHSW:Bertini}.
Since the connected hexagonal region
containing $\omega^*$ has a trivial fundamental
group, it follows that both
$\sG_5$ and $\sG_6$ are trivial.
Moreover, since the
curved ``hexagram,'' i.e., ``six-sided star,'' region
consisting of the set
of parameter values with at least $4$ real solutions
also has a trivial fundamental group,
$\sG_3$ and $\sG_4$ must also be trivial.
Hence, we only need to consider $\sG_1$ and $\sG_2$.

To help with the bookkeeping, we fix a marked
point in each connected component with~$\omega^*$
being the one in its component
and place an ordering on the real solutions
over each marked point.  Then, in the
two parameter case, we identify each
of the finitely many
smooth segments of the boundaries of the connected
components.  Along such segments of the boundary,
there is a consistent identification of
the real solutions which are no longer
nonsingular at the boundary.
Thus, there is an equivalence along each smooth
segment of the boundary.  A similar statement
holds for smooth regions of the boundary
when there are more parameters.

From Fig.~\ref{Fig:Kuramoto}, there are
$18$ smooth boundary segments to
consider: $6$
for the region having $6$ real solutions
and two additional ones for each of
the $6$ regions having $4$ real solutions.  A homotopy is used to identify
the behavior of the solutions between
each boundary segment using the
consistent ordering from the marked point of the region.  Intermediate points
can be used to assist in this, which are
especially useful in nonconvex regions
to connect the marked points
as shown~in~Fig.~\ref{Fig:Kuramoto2}.

\begin{figure}[!t]
\begin{center}
	\includegraphics[scale = 0.35]{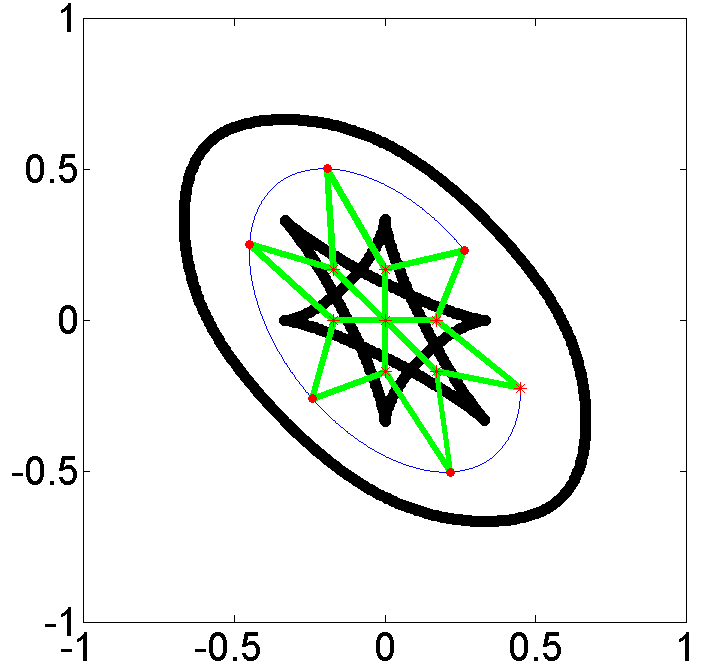}
    \end{center}
\caption{A plot of the boundaries of the connected components (black lines) for \eqref{eq:Kuramoto},
selected marked points (red star)
in each connected component, all intermediary
points (red dot), paths crossing the boundary (green lines), and connections between intermediate points
and the marked point of the component (blue curves).}
    \label{Fig:Kuramoto2}
\end{figure}

Finally, an analysis of the data generated
from traversing the boundaries produces all
of the possible interconnections between
the real solutions $x^{(1)},\dots,x^{(6)}$
at $\omega^*$.  In particular, this analysis
shows that the hexagram region consists of two large curved triangles: one pointing northwest and the other point southeast.
On the boundary of the northwest triangle, $x^{(6)}$ always becomes singular and merges with
one of $x^{(2)}$, $x^{(3)}$, or $x^{(4)}$ along
its three sides.
Similarly, for the boundary
of the southeast triangle, $x^{(5)}$ always becomes singular and merges with
one of $x^{(2)}$, $x^{(3)}$, or $x^{(4)}$ along
its three sides.
Since one must cross
both triangular boundaries, to possibly
have a nontrivial action,
we know that $\sG_1$
and $\sG_2$ applied to any set involving
either $5$ or $6$ is trivial.

Furthermore, this analysis shows that the solution sheet corresponding to $x^{(1)}$
over the colored
region of the parameter space
in Fig.~\ref{Fig:Kuramoto} is nonsingular.
In particular, this explains
why the real monodromy
group computed in
Ex.~\ref{Ex:Kuramoto} for the connected component
colored orange in Fig.~\ref{Fig:Kuramoto}
having~2 real solutions is trivial
even though the fundamental group
for this component is not trivial.
Hence, $\sG_1(\{1\}) = \{\{1\}\}$
but it is different than~$x^{(5)}$ and $x^{(6)}$ in
that it can be included in nontrivial $\sG_2$ action.
In fact, since it is possible to move
from $\omega^*$ into
this orange component
using nonsingular paths starting at $x^{(1)}$
and one of $x^{(2)}$, $x^{(3)}$, or $x^{(4)}$,
we have that $\sG_1$ is transitive on
$\{2\}$, $\{3\}$, and~$\{4\}$
and $\sG_2$ is transitive on $\{1,2\}$,
$\{1,3\}$, and $\{1,4\}$.
This is summarized in the following:
\begin{itemize}
\item $\mathcal{G}_1$
	\begin{itemize}
		\item $\{2\},\{3\},\{4\} \mymapsto \{\{2\},\{3\},\{4\}\}$
		\item $\{q_1\}\mymapsto \{\{q_1\}\}$ for all others
	\end{itemize}
\item $\mathcal{G}_2$
	\begin{itemize}
		\item $\{1,2\},\{1,3\},\{1,4\} \mymapsto \{\{1,2\},\{1,3\},\{1,4\}\}$
		\item $\{q_1,q_2\} \mymapsto \{\{q_1,q_2\}\}$ for all others.
	\end{itemize}
\end{itemize}
Therefore, the real monodromy structure
provides three distinct groups
of solutions which have similar properties:
$\{x^{(1)}\}$, $\{x^{(2)},x^{(3)},x^{(4)}\}$,
and $\{x^{(5)},x^{(6)}\}$.
\end{example}

\section{Real monodromy of the 3RPR mechanism}\label{Sec:3RPR}

The 3RPR mechanism, as shown in Fig.~\ref{Fig:mechanism}, is a well-known
mechanism that has a nonsingular assembly mode
change \cite{Gosselin,Hayes,Zein,Innocenti,Macho,Bonev,Husty}.
Thus, with an appropriate choice
of base point,
$\sG_1$ of the real monodromy
structure for this base point will be nontrivial.
In this section, we compute the complete
real monodromy structure $\sG_\bullet$
using a base point from \cite{Husty}
when one of the leg lengths is fixed
and the other two legs are free to change lengths.

As shown in Fig.~\ref{Fig:3RPR_Mechanism}, let the leg lengths be $\ell_1$, $\ell_2$,
and $\ell_3$.  For simplicity, we will
consider the squares of the leg lengths,
namely $c_i = \ell_i^2$.
In the fixed frame, set the three anchors of the
three legs, respectively, at
$(0,0)$, $(A_2,0)$, and $(A_3,B_3)$.
In the moving frame, set the three connections
of the three legs, respectively, attached to the
triangle at $P_1 = (0,0)$, $P_2 = (a_2,0)$, and $P_3 = (a_3,b_3)$.
Following the case studied in \cite{Husty},
we take the following constants:
\begin{equation}\label{eq:mechConstants}
a_2 = 14, \hspace{2mm} a_3 = 7, \hspace{2mm} b_3 = 10, \hspace{2mm} A_2 = 16, \hspace{2mm} A_3 = 9, \hspace{2mm} B_3 = 6, \hspace{2mm} c_3 = \ell_3^2 = 100.
\end{equation}
For the parameters $c = (c_1,c_2)$,
we take the base point, corresponding
to the ``home'' position,
to be $c^* = (75,70)$ as in \cite{Husty}
where the mechanisms
satisfying this setup
are shown in Fig.~\ref{fig:six_solutions}.

\begin{figure}[t!]
	\centering
	\includegraphics[scale = 0.6]{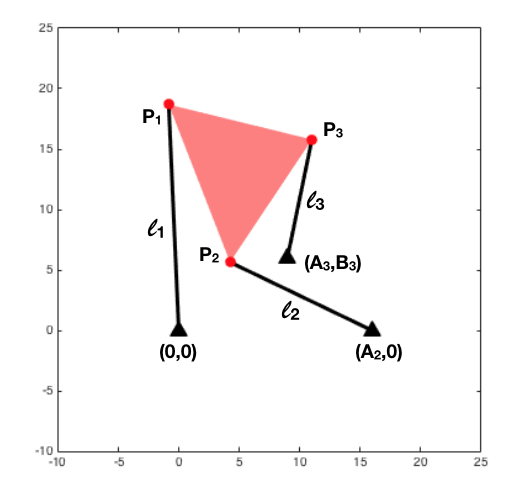}
	\caption{An illustration of a 3RPR mechanism.}\label{Fig:3RPR_Mechanism}
\end{figure}

The variables $(p,\phi) = (p_1,p_2,\phi_1,\phi_2)$
of the polynomial system represent
the relative position and rotation between
fixed frame and moving frame, respectively.
The polynomials constrain
the rotation $(\phi_1,\phi_2)$
to a point on the unit circle
and describe the three leg constraints:
$$F(p,\phi;c) =
\left[\begin{array}{l}
\phi_1^2 + \phi_2^2 - 1 \\
p_1^2 + p_2^2 - 2(a_3p_1 + b_3p_2)\phi_1 + 2(b_3p_1 - a_3p_2)\phi_2 + a_3^2 + b_3^2 - c_1 \\
p_1^2 + p_2^2 - 2A_2p_1 +
2((a_2-a_3)p_1 - b_3p_2 + A_2a_3 - A_2a_2)\phi_1 \\
\hspace{10mm} +~2(b_3p_1 + (a_2-a_3)p_2 - A_2b_3)\phi_2
+ (a_2-a_3)^2 + b_3^2 + A_2^2 - c_2 \\
p_1^2 + p_2^2 - 2(A_3 p_1 + B_3 p_2) + A_3^2 + B_3^2 - c_3
\end{array}\right].$$
At the ``home'' position $c^* = (75,70)$,
the system $F(p,\phi;c^*) = 0$ has $6$
nonsingular real solutions
which are assigned labels
$x^{(1)},\dots,x^{(6)}$ in Fig.~\ref{fig:six_solutions}.
The remaining part of this section
describes computing the
real monodromy structure $\sG_\bullet=\{\sG_1,\dots,\sG_6\}$
for $F$ at $c^*$
where we utilize {\tt Bertini} \cite{BHSW:Bertini}
to perform the homotopy continuation computations.

\begin{figure}[!t]
\centering
\begin{tabular}{ccc}
  \includegraphics[scale = 0.25]{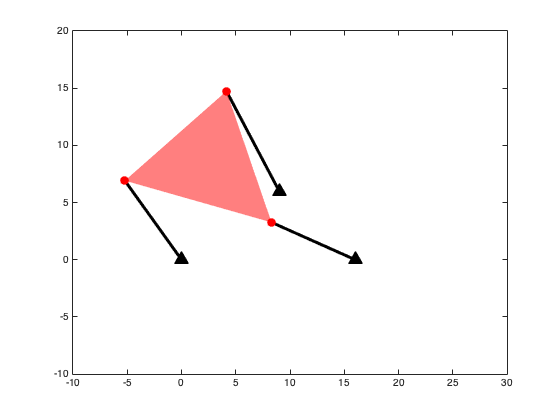} &
  \includegraphics[scale = 0.25]{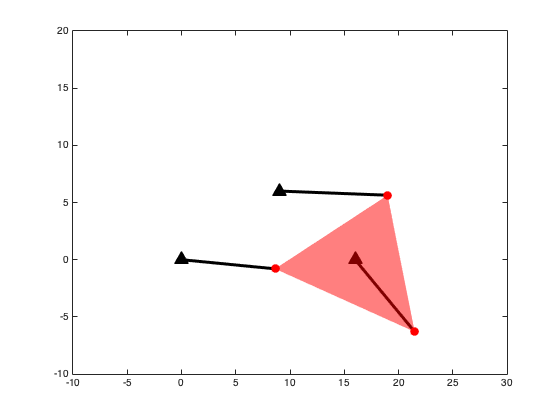} &
  \includegraphics[scale = 0.25]{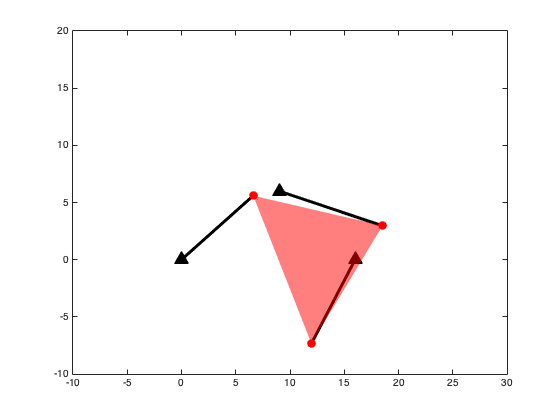} \\
  $x^{(1)}$ & $x^{(2)}$ & $x^{(3)}$ \\[0.1in]
  \includegraphics[scale = 0.25]{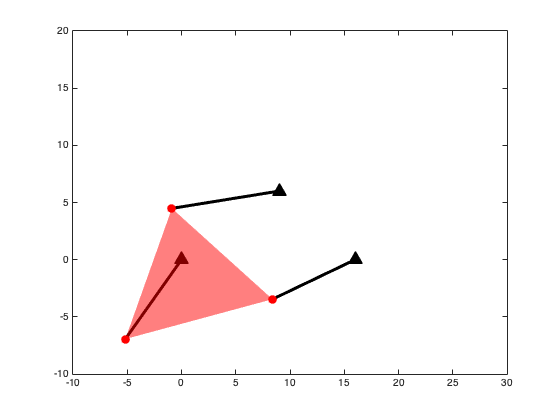} &
  \includegraphics[scale = 0.25]{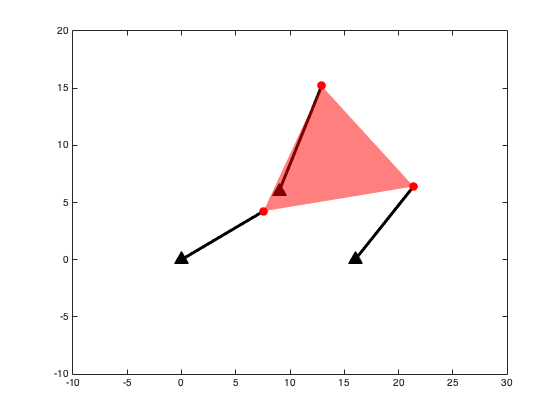} &
  \includegraphics[scale = 0.25]{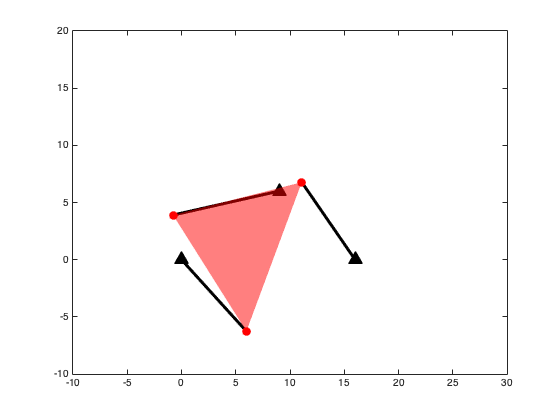}\\
    $x^{(4)}$ & $x^{(5)}$ & $x^{(6)}$
\end{tabular}
\caption{The $6$ solutions
to $F(p,\phi;c^*) = 0$.}
\label{fig:six_solutions}
\end{figure}

First, we compute the boundaries of the
subsets of $\bR^2$ where the number of real
solutions change using \cite{Discriminant}.  Figure~\ref{Fig:3RPR_Discriminant}
colors the regions in $c = (c_1,c_2)\in \bR^2$
having $0$, $2$, $4$, and $6$ real solutions,
where $c^* = (75,70)$ lies in the unique
connected component having $6$ real solutions.
In particular, since this component
has a trivial fundamental group, Theorem~\ref{Thm:OnePar_RMG} concludes
that the real monodromy group is trivial.
It follows that $\sG_5$ and $\sG_6$ are also trivial.
We note that the (complex) monodromy group
is $\cS_6$ showing there is no complex structure
in the solutions encoded by the~monodromy~group.

\begin{figure}[t!]
\begin{subfigure}{.5\textwidth}
  \centering
  \includegraphics[scale = 0.31]{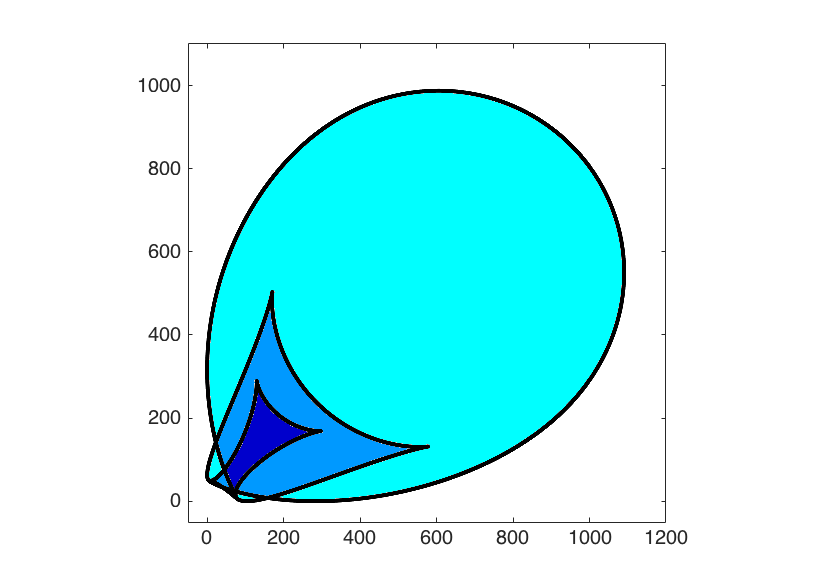}
  \caption{}
  \label{fig:disclocus1}
\end{subfigure}%
\begin{subfigure}{.5\textwidth}
  \centering
  \includegraphics[scale = 0.31]{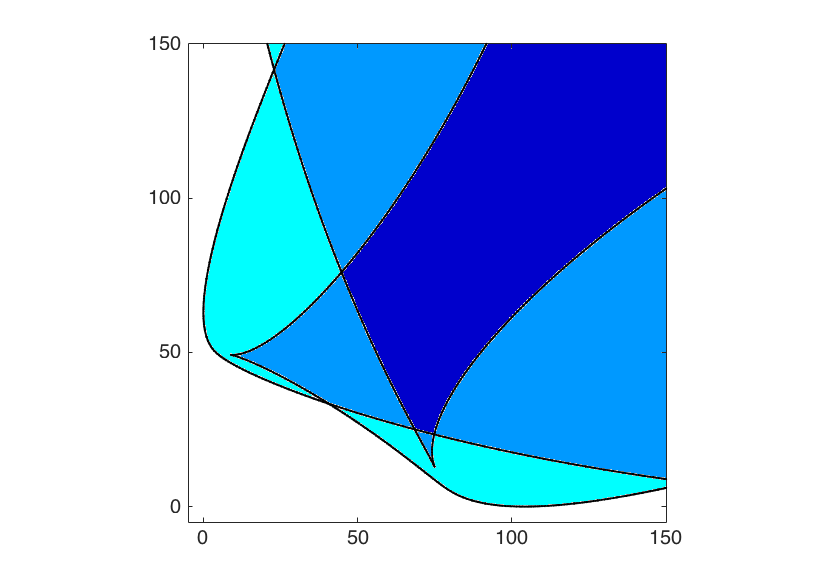}
  \caption{}
  \label{fig:disclocus2}
\end{subfigure}
\caption{Regions of the parameter space $c = (c_1,c_2)$ colored by the number of real solutions
where (a) is the full view and (b) is a zoomed in view of the lower left corner. The navy blue region has~6 real solutions, the grey blue region has 4 real solutions, the baby blue region has 2 real solutions,
and the white region has 0 real solutions.}\label{Fig:3RPR_Discriminant}
\end{figure}

To help with the bookkeeping,
we fix a marked point in each connected
component and select additional intermediate points
to simplify the computation of loops
as shown in Fig.~\ref{Fig:3RPR_basepoints}.
To compute all possible loops, we need
to transverse between
the connected components.
Since there is an equivalence
by passing through smooth regions of the
boundary, there are only finitely many
possible loops of interest.
For example, leaving the navy blue region
having 6 real solutions
and entering into the largest grey blue
connected component that touches it
has at most three different outcomes
obtained by crossing through the three different
smooth segments of the boundary.
The intermediate points facilitate
moving the solutions to the marked point
to have consistent~ordering.

\begin{figure}[t!]
\begin{subfigure}{.5\textwidth}
  \centering
  \includegraphics[scale = 0.31]{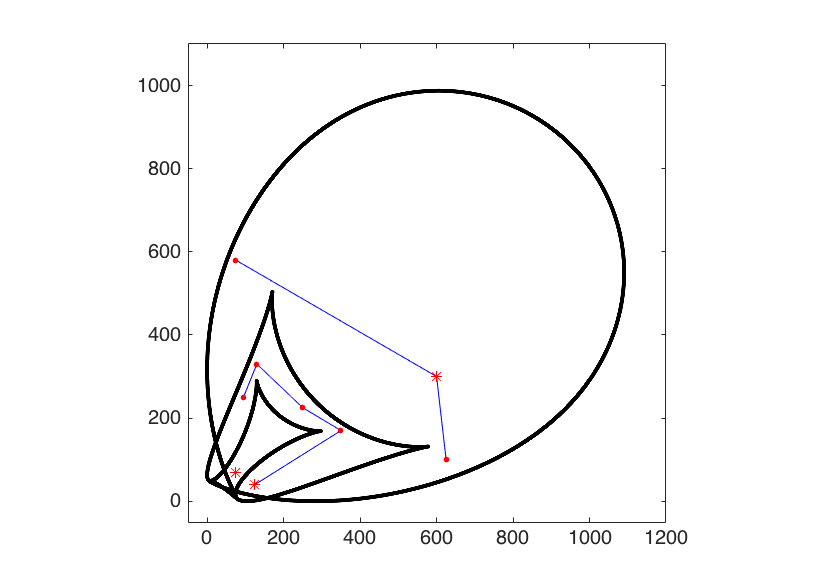}
  \caption{}
  \label{fig:basepoints1}
\end{subfigure}%
\begin{subfigure}{.5\textwidth}
  \centering
  \includegraphics[scale = 0.31]{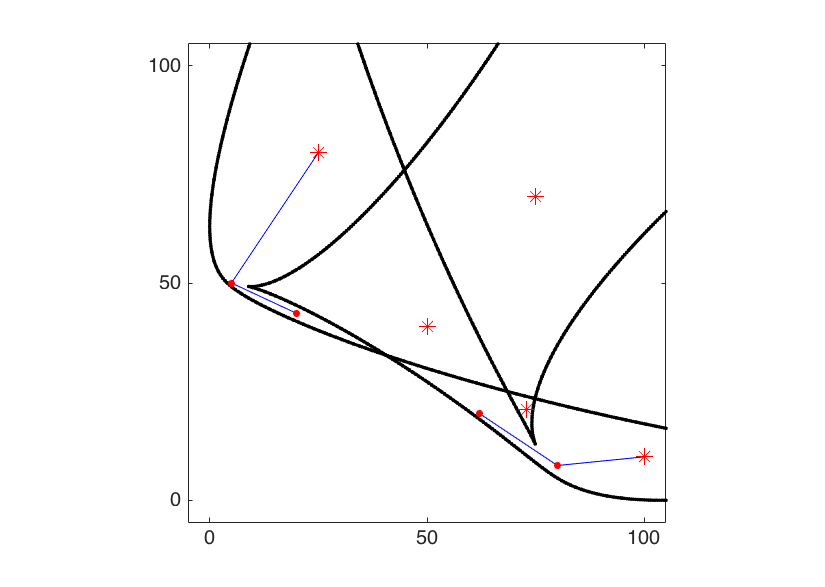}
  \caption{}
  \label{fig:basepoints2}
\end{subfigure}
\caption{Selected marked points (red star)
in each connected component and all intermediary points (red dot) in (a) full view and (b) zoomed in view of the lower left corner.}\label{Fig:3RPR_basepoints}
\end{figure}

By tracking solutions along all possible
loops and carefully keeping track of those
that remain nonsingular, we obtain
the following real monodromy structure
where $\sG_5$ and $\sG_6$ are trivial:
\begin{itemize}
\item $\mathcal{G}_1$
	\begin{itemize}
		\item $\{1\},\{2\},\{3\} \mymapsto \{\{1\},\{2\},\{3\}\}$
		\item $\{4\},\{5\},\{6\} \mymapsto \{\{4\},\{5\},\{6\}\}$
	\end{itemize}
\item $\mathcal{G}_2$
	\begin{itemize}
		\item $\{1,4\},\{1,5\},\{1,6\},\{2,5\},\{2,6\},\{3,4\},\{3,5\} \mymapsto \left\{
\begin{array}{c}		
		\{1,4\},\{1,5\},\{1,6\},\{2,5\},\\
		\{2,6\},\{3,4\},\{3,5\}\end{array}\right\}$
		\item $\{1,3\},\{2,3\} \mymapsto \{\{1,3\},\{2,3\}\}$
		\item $\{4,6\},\{5,6\} \mymapsto \{\{4,6\},\{5,6\}\}$
        \item $\{q_1,q_2\} \mymapsto \{\{q_1,q_2\}\}$ for all others
	\end{itemize}
\item $\mathcal{G}_3$
	\begin{itemize}
		\item $\{1,4,6\},\{1,5,6\},\{2,5,6\} \mymapsto \{\{1,4,6\},\{1,5,6\},\{2,5,6\}\}$
		\item $\{1,3,6\},\{2,3,6\} \mymapsto \{\{1,3,6\},\{2,3,6\}\}$
		\item $\{3,4,6\},\{3,5,6\} \mymapsto \{\{3,4,6\},\{3,5,6\}\}$
\item $\{q_1,q_2,q_3\} \mymapsto \{\{q_1,q_2,q_3\}\}$ for all others

	\end{itemize}
\item $\mathcal{G}_4$
	\begin{itemize}
		\item $\{1,3,4,6\},\{1,3,5,6\},\{2,3,5,6\} \mymapsto \{\{1,3,4,6\},\{1,3,5,6\},\{2,3,5,6\}\}$
\item $\{q_1,q_2,q_3,q_4\} \mymapsto \{\{q_1,q_2,q_3,q_4\}\}$ for all others

	\end{itemize}
\end{itemize}

Figures~\ref{Fig:3RPR_Path}
and~\ref{Fig:3RPR_MechanismPath}
illustrate a nonsingular assembly mode change
between $x^{(4)}$ and $x^{(5)}$.

\begin{figure}[b!]
	\centering
	\includegraphics[scale = 0.34]{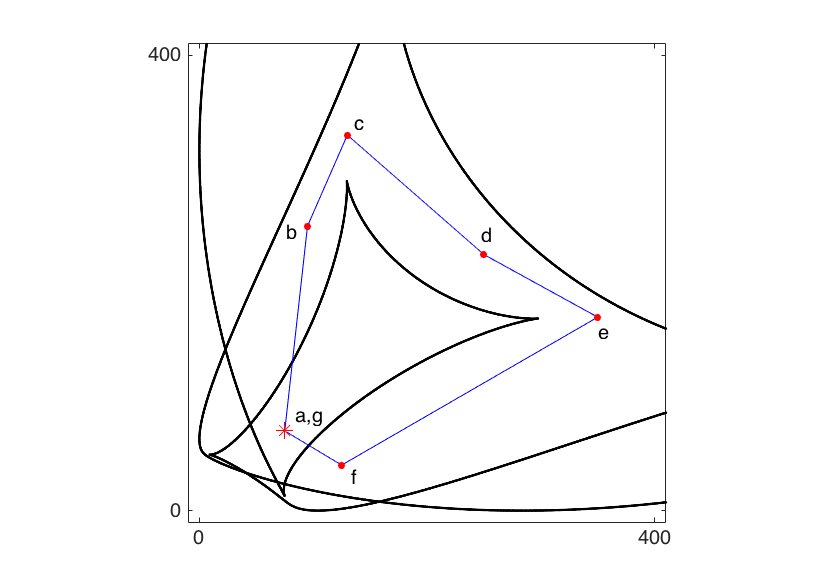}
	\caption{A loop starting and ending at $c^* = (75,70)$ that yields a nonsingular assembly mode change.
	The corresponding linkage pose for the labeled
	points are shown in Fig.~\ref{Fig:3RPR_MechanismPath}.}\label{Fig:3RPR_Path}
\end{figure}

\begin{figure}[b!]
\centering
	\includegraphics[scale = 0.55]{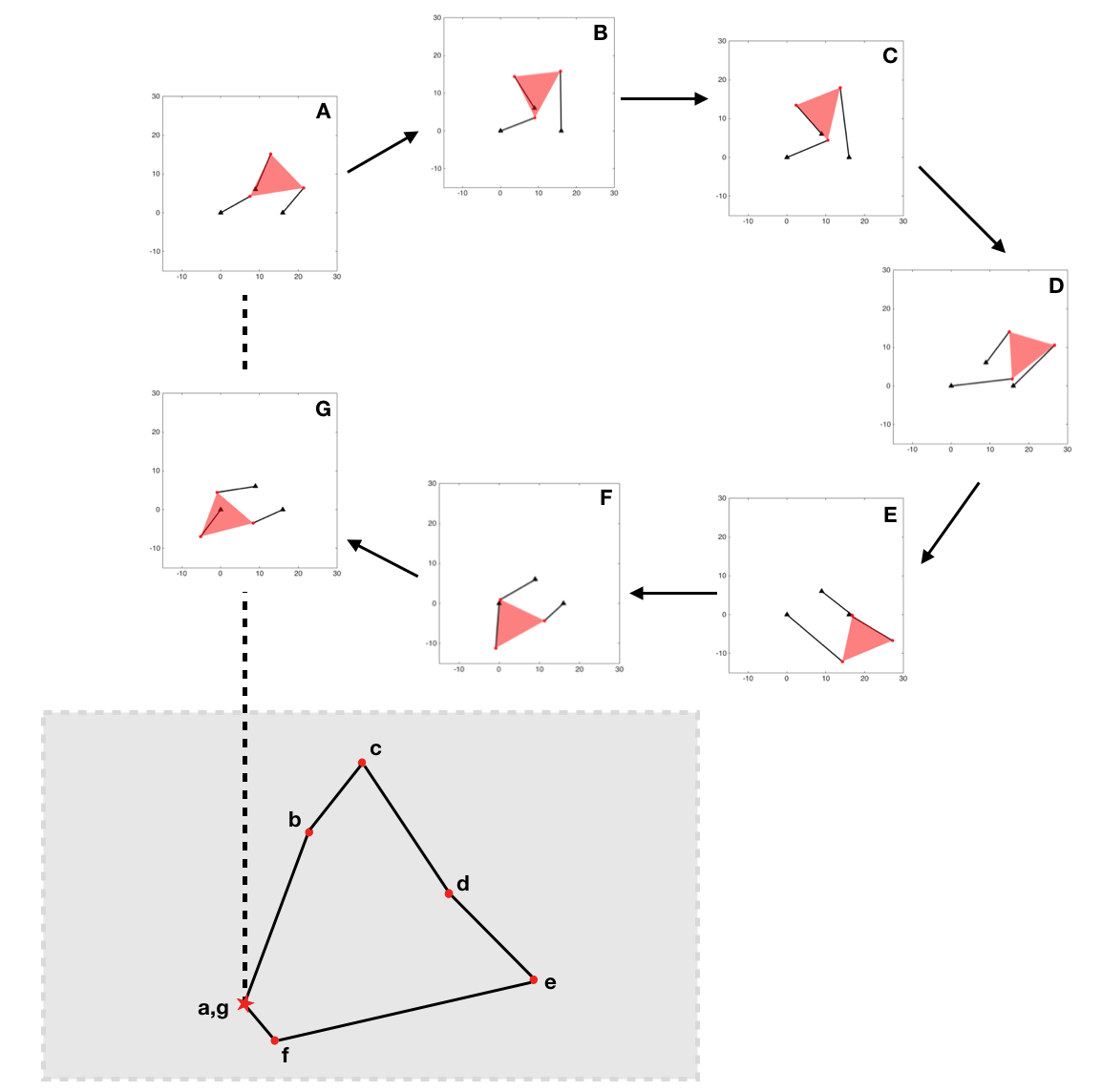}
\caption{Solutions along a loop that
yields a nonsingular assembly mode change.}
\label{Fig:3RPR_MechanismPath}
\end{figure}

The real monodromy structure $\sG_\bullet$
identifies that the real solutions arise in two groups
of three solutions coinciding with
the results in \cite{Husty}
which showed that there are two disjoint
path-connected components.
In fact, in light of this constraint,
$\sG_1$ shows that all possible
nonsingular assembly mode changes
can occur.
The real monodromy structure
$\sG_\bullet$ provides additional information
beyond nonsingular assembly mode changes
by considering how other
subsets of solutions can be interchanged.
For example, $\sG_4$ shows that
it is possible to interchange
two solutions $x^{(1)}$ and~$x^{(2)}$
while having three solutions
$x^{(3)}$, $x^{(5)}$, and $x^{(6)}$
return to themselves
with all four paths remaining real and nonsingular.
It is also possible to interchange
two pairs of solutions $x^{(1)}$
and~$x^{(2)}$, and $x^{(4)}$ and $x^{(5)}$
while having two solutions
$x^{(3)}$ and $x^{(6)}$
return to themselves
with all four paths remaining real and nonsingular.

\section{Conclusion}\label{Sec:Conclusion}

The (complex) monodromy group is a classically used
invariant in algebraic geometry
to study the structure of solutions to a parameterized
system of polynomial equations.
Since many applications involve working with real solution sets over real parameter spaces,
an extension of the monodromy action computations to the real numbers is needed.
A naive extension is to consider loops where all
real solutions stay real and nonsingular along the
solution path yielding the real monodromy group.
However, this is very restrictive and is
often trivial.  Thus,
we propose a real monodromy structure that gives
tiered information on the monodromy actions for the real solutions.  This enables useful
structural information to be obtained and circumvents the restrictiveness of the naive extension
by relaxing the condition that {\em all}
real paths remain nonsingular.

The real monodromy structure for the
3RPR mechanism allowing two legs to change length
describes how the solutions can interchange
thereby providing a complete mathematical generalization of nonsingular assembly mode changes.
This information can be useful, for example,
in calibration.  If no real solutions
can interchange, i.e., the real monodromy
structure is trivial, then returning to the ``home''
position avoiding singularities
will always yield the same pose.
However, if the real monodromy structure
is not trivial, then it describes all possible
interconnections between
poses over the ``home'' position.
Future work includes computing
real monodromy structures for Stewart-Gough platforms.

\section{Acknowledgments}\label{Sec:Acknowledgments}

The authors thank Charles Wampler for helpful discussions on kinematics and 3RPR mechanism.

\newcommand{\noopsort}[1]{}\def\cprime{$'$}


\begin{thebibliography}{10}

\bibitem{BHSW:Bertini} D.J. Bates, J.D. Hauenstein, A.J. Sommese, and C.W. Wampler,
\newblock Bertini: Software for numerical algebraic geometry.
\newblock Available at {\tt bertini.nd.edu}.

\bibitem{BHSW:BertiniBook} D.J. Bates, J.D. Hauenstein, A.J. Sommese, and C.W. Wampler,
{\em Numerically Solving Polynomial Systems with Bertini}.
SIAM, Philadelphia, 2013.

\bibitem{Bonev} I. Bonev, S. Briot, P. Wenger, and D. Chablat,
Changing assembly modes without passing parallel singularities in non-cuspidal parallel robots.
In \emph{Proceedings of the Second Int. Workshop on Fundamental Issues and Future Research Directions for Parallel Mechanisms and Manipulators}, 2008, pp. 197--200.

\bibitem{CossKuramoto} O. Coss, J.D. Hauenstein, H. Hong, and D.K. Molzahn,
Locating and counting equilibria of the Kuramoto model with rank one coupling.
{\em SIAM J. Appl. Alg. Geom.}, 2(1):45--71, 2018.

\bibitem{Gosselin} C.M. Gosselin, J. Sefrioui, and M.J. Richards,
Solutions Polynomiales au Probl\`{e}me de la Cin\'{e}matique Directe des Manipulateurs Parall\`{e}les Plans \`{a} Trois Degr\'{e}s de Libert\'{e}.
\emph{Mehc. Mach. Theory}, 27(2):1007--1019, 1992.


\bibitem{Discriminant}
H.A. Harrington, D. Mehta, H.M. Byrne, and J.D. Hauenstein,
Decomposing the parameter space of biological networks via a numerical discriminant approach.
Preprint available at \url{www.nd.edu/~jhauenst/preprints/hmbhCellNetwork.pdf}.

\bibitem{Harris} J. Harris,
Galois groups of enumerative problems.
{\em Duke Math. J.}, 46:685--724, 1979.

\bibitem{MonodromyAlg} J.D. Hauenstein, J.I. Rodriguez, and F. Sottile,
Numerical computation of Galois groups.
\emph{Foundations of Computational Mathematics}, 18(4):867--890, 2018.


\bibitem{Hayes} M.J.D. Hayes,
{\em Kinematics of General Planar Stewart-Gough Platforms}.
McGill University, 1999.

\bibitem{Hermite} C. Hermite,
Sur les fonctions alg\'ebriques.
{\em CR Acad. Sci.(Paris)}, 32:458--461, 1851.

\bibitem{Husty} M.L. Husty,
Non-singular assembly mode change in 3-RPR-parallel manipulators.
In \emph{Computational Kinematics}, 2009, pp. 51--60.

\bibitem{Innocenti} C. Innocenti and V. Parenti-Castelli,
Singularity-free evolution from one configuration to another in serial and fully-parallel manipulators.
\emph{J. Mech. Des.}, 120:73--79, 1998.

\bibitem{Kuramoto} Y. Kuramoto,
{\em Chemical Oscillations, Waves, and Turbulence.} Springer, Berlin, 1984.

\bibitem{Leykin} A. Leykin and F. Sottile,
Galois groups of Schubert problems via homotopy computation.
\emph{Math. Comp.}, 78(267):1749--1765, 2009.

\bibitem{Macho} E. Macho, O. Altuzarra, and C. Pinto, A. Hernandez,
Singularity free change of assembly mode in parallel manipulators.
In \emph{Proceedings of IFToMM 2007}, 2007, pp. 1--6.


\bibitem{Sommese} A.J. Sommese and C.W. Wampler,
{\em The Numerical Solution of Systems of Polynomials Arising in Engineering and Science.}
World Scientific, Hackensack, NJ, 2005.

\bibitem{Zariski} O. Zariski,
A theorem on the Poincare group of an algebraic hypersurface.
\emph{Annals of Mathematics}, 38(1):131--141, 1937.

\bibitem{Zein} M. Zein, P. Wenger, and D. Chablat,
Nonsingular assembly-mode changing motions for 3-RPR parallel manipulators.
\emph{Mech. Mach. Theory}, 43:480--490, 2008.


\end{thebibliography}
\end{document}